\newcommand{\nc}{\newcommand}
\numberwithin{equation}{section}
\newtheorem{theorem}{Theorem}[section]
\newtheorem{prop}[theorem]{Proposition}
\newtheorem{importnota}[theorem]{Important Notation}
\newtheorem{prblm}[theorem]{Problem}
\newtheorem{notation}[theorem]{Notation}
\newtheorem{caution}[theorem]{Caution}
\newtheorem{remark}[theorem]{Remark}
\newtheorem{lemma}[theorem]{Lemma}
\newtheorem{construction}[theorem]{Construction}
\newtheorem{corollary}[theorem]{Corollary}
\newtheorem{example}[theorem]{Example}
\newtheorem{conclusion}[theorem]{Conclusion}
\newtheorem{triviality}[theorem]{Triviality}
\newtheorem{proto}[theorem]{Prototype Quasifibration}
\newtheorem{cauex}[theorem]{Cautionary Example}
\newtheorem{propositiondef}[theorem]{Proposition-Definition}
\newtheorem{subth}{Nuisance}[theorem]
\newtheorem{ssubth}{ }[subth]
\newtheorem{conjecture}[theorem]{Conjecture}
\newtheorem{sidest}[theorem]{Side Story}
\newtheorem{miniexample}[theorem]{Example}
\theoremstyle{definition}
\newtheorem{defin}[theorem]{Definition}
\nc\tri[1]{\begin{triviality}}
\nc\side[1]{\begin{sidest}}
\nc\conj[1]{\begin{conjecture}}
\nc\prodef[1]{\begin{propositiondef}}
\nc\prt[1]{\begin{proto}}
\nc\lem[1]{\begin{lemma}}
\nc\sblm[1]{\begin{sublemma}}
\nc\pro[1]{\begin{prop}}
\nc\thm[1]{\begin{theorem}}
\nc\cor[1]{\begin{corollary}}
\nc\dfn[1]{\begin{defin}}
\nc\sthm[1]{\begin{subth}}
\nc\exm[1]{\begin{example}}
\nc\miniexm[1]{\begin{miniexample}}
\nc\plm[1]{\begin{prblm}}
\nc\rmk[1]{\begin{remark}}
\nc\subrmk[1]{\begin{subremark}}
\nc\ntn[1]{\begin{notation}}
\nc\cau[1]{\begin{caution}}
\nc\imn[1]{\begin{importnota}}
\nc\cax[1]{\begin{cauex}}
\nc\con[1]{\begin{construction}}
\nc\ssthm[1]{\begin{ssubth}}
\nc\cnc[1]{\begin{conclusion}}
\nc\elem{\end{lemma}}
\nc\esblm{\end{sublemma}}
\nc\eside{\end{sidest}}
\nc\econj{\end{conjecture}}
\nc\eprodef{\end{propositiondef}}
\nc\eprt{\end{proto}}
\nc\ethm{\end{theorem}}
\nc\ecor{\end{corollary}}
\nc\edfn{\end{defin}}
\nc\esthm{\end{subth}}
\nc\epro{\end{prop}}
\nc\etri{\end{triviality}}
\nc\eexm{\end{example}}
\nc\eminiexm{\end{miniexample}}
\nc\ermk{\end{remark}}
\nc\subermk{\end{subremark}}
\nc\eplm{\end{prblm}}
\nc\ecau{\end{caution}}
\nc\ecax{\end{cauex}}
\nc\eimn{\end{importnota}}
\nc\entn{\end{notation}}
\nc\econ{\end{construction}}
\nc\ecnc{\end{conclusion}}
\nc\essthm{\end{ssubth}}
\newcommand{\R}{\mathbb{R}}
\newcommand{\Q}{\mathbb{Q}}
\newcommand{\Z}{\mathbb{Z}}
\newcommand{\A}{\mathbb{A}}
\newcommand{\G}{\Gamma}
\newcommand{\ds}{\displaystyle}
\newcommand{\bs}{\backslash}
\renewcommand{\Bbb}{\mathbb}
\title[Non-vanishing of Miyawaki type lift]
{Non-vanishing of Miyawaki type lift}
\author{Henry H. Kim and Takuya Yamauchi}
\keywords{Miyawaki type lift, Langlands functoriality}
\thanks{The first author is partially supported by NSERC. The second author
is partially supported by JSPS Grant-in-Aid for Scientific Research (C) No.15K04787}
\subjclass[2010]{Primary 11F55; Secondary 11F70, 22E55, 20G41}
\address{Henry H. Kim \\
Department of mathematics \\
 University of Toronto \\
Toronto, Ontario M5S 2E4, CANADA \\
and Korea Institute for Advanced Study, Seoul, KOREA}
\email{henrykim@math.toronto.edu}
\address{Takuya Yamauchi \\ 
Mathematical Inst. Tohoku Univ.\\
 6-3,Aoba, Aramaki, Aoba-Ku, Sendai 980-8578, JAPAN and 
Max planck institute for mathematics, Bonn, 
Germany}
\email{tyamauchi@m.tohoku.ac.jp}
\begin{document}

\begin{abstract}

In this paper, we show the non-vanishing of the Miyawaki type lift for $GSpin(2,10)$ constructed in \cite{KY1}, by using the fact that the Fourier coefficient at the identity is closely related to the Rankin-Selberg $L$-function of two elliptic cusp forms. 
In the case of the original Miyawaki lift of Siegel cusp forms, we reduce the non-vanishing problem to that of the Rankin-Selberg convolution of two Siegel cusp forms.
\end{abstract}
\maketitle

\section{Introduction}

Miyawaki type lifts are kinds of Langlands functorial lifts and a special case was first conjectured by Miyawaki \cite{M} and proved by Ikeda for Siegel cusp forms in \cite{Ik1}. Since then, 
such a lift for Hermitian modular forms was constructed by Atobe and Kojima \cite{AK}, and for half-integral weight Siegel cusp forms by Hayashida \cite{H}, and we constructed Miyawaki type lift for ${\rm GSpin}(2,10)$ \cite{KY1}. 
Recently Ikeda and Yamana \cite{IY} generalized Ikeda type construction for Hilbert-Siegel cusp forms in a remarkable way and accordingly Miyawaki type lift for Hilbert-Siegel cusp forms for any level follows.   
 In all these works, a construction of Miyawaki type lift takes two steps as follows: First, construct Ikeda type lift on a bigger group from an elliptic cusp form, and then define a certain integral on a block diagonal element which is 
an analogue of pull-back formula studied by Garrett \cite{Garrett} for Siegel Eisenstein series. If the integral is non-vanishing, then it is shown that it is a Hecke eigen cusp form, and it is the Miyawaki type lift. The question of non-vanishing of the integral was left open. 

In this paper, we show the non-vanishing for certain special cases. The idea is to write the Ikeda type lift as Fourier-Jacobi expansion with matrix index $S$. Then the Fourier coefficients of the Miyawaki type lift become the integral of vector-valued modular forms and theta series. By choosing $S$ carefully, we can show that the Fourier coefficient of index $S$ is non-vanishing. 

In particular, in Section \ref{GS}, we show it for the Miyawaki type lift for 
$GSpin(2,10)$. The Miyawaki type lift in this case is a cusp form on $GSpin(2,10)$ associated to two cusp forms $f\in S_{2k}(SL_2(\Bbb Z))$ and $g\in S_{2k+8}(SL_2(\Bbb Z))$. 
In this case, the situation is very nice in that $S={\bold 1}_2$ is associated to an even unimodular matrix $E_8^{\oplus2}$ ($E_8$ denotes the unique even $8\times 8$ unimodular matrix), and the integral becomes essentially the Rankin-Selberg $L$-function $L(s,f\otimes g)$ at $s=4$. Therefore it is non-vanishing. Recently, we constructed the Ikeda type lift for the exceptional group of type $E_7$ for any level, and hence the Miyawaki type lift can be generalized in an obvious way. For the non-vanishing for higher level case, we use the adelic language by following \cite{T}.

In Section \ref{Ikeda}, we consider the original Miyawaki lift in \cite{Ik1}. Namely, 
the Miyawaki lift is a cusp form on 
$S_{k+n+r}(Sp_{4n+2r}(\Bbb Z))$ associated to two cusp forms $f\in S_{2k}(SL_2(\Bbb Z))$ and $g\in S_{k+n+r}(Sp_{2r}(\Bbb Z))$. 
Consider the Fourier-Jacobi expansion of the Ikeda lift $F_f\in S_{k+n+r}(Sp_{4n+4r}(\Bbb Z))$ 
with matrix index $S$, where $S$ is a half-integral symmetric matrix of size $2n+r$. If 
$2n+r$ is divisible by 8, there exists an even unimodular matrix of size $2n+r$, and in that case, the integral becomes the integral of two Siegel cusp forms and theta series. Here we use the Siegel's formula that says that the linear combination of the theta series is the Eisenstein series. Hence a linear combination of the integrals becomes the Rankin-Selberg convolution of two Siegel cusp forms \cite{Y}. It is not known whether the convolution is non-vanishing. If we assume the non-vanishing of the convolution, then the Miyawaki lift is non-vanishing. 

In Section \ref{Unitary}, we consider the Miyawaki lift for the unitary group in \cite{AK}. 
Namely, let $K$ be an imaginary quadratic field with discriminant $-D$, and $\chi=\chi_D$ be the Dirichlet character corresponding to $K/\Bbb Q$.
Let $f$ be a normalized Hecke eigen cusp form belonging to $\begin{cases} S_{2k}(SL_2(\Bbb Z)), &\text{if $n$ odd}\\ S_{2k+1}(\Gamma_0(D),\chi), &\text{if $n$ even}\end{cases}$. Then given a cusp form $g$ of weight $2k+2[\frac n2]+2r$ on $U(r,r)$ defined over $K/\Bbb Q$, 
the Miyawaki lift $\mathcal F_{f,g}$ is a cusp form of weight $2k+2[\frac n2]+2r$ on $U(n+r,n+r)$ defined over $K/\Bbb Q$. Atobe and Kojima \cite{AK} showed that if $\mathcal F_{f,g}$ is non-vanishing, it is a Hecke eigen form.
We consider the special case $4|(n+r)$. In this case, we use classification of even unimodular matrices over imaginary quadratic fields in \cite{CR}, and use the Siegel-Weil formula for unitary groups \cite{Ich} in order to obtain the analogue of the Siegel formula for Hermitian theta series. If $r=1$, the Miyawaki lift $\mathcal F_{f,g}$ is non-vanishing. If $r>1$, assuming the non-vanishing of the Rankin-Selberg convolution, we show that the Miyawaki lift is non-vanishing.

In the last section, we give an outline of non-vanishing of the Miyawaki lift for half-integral weight Siegel cusp forms assuming the non-vanishing of a similar integral involving half-integral weight modular forms.

Finally we remark that recently, Atobe \cite{At} independently obtained non-vanishing of the original Miyawaki lift by a different method. He assumes Gan-Gross-Prasad conjecture \cite{GGP}. 

\medskip

\noindent\textbf{Acknowledgments.} We would like to thank H. Atobe, T. Ikeda, S.  Hayashida, and M. Tsuzuki for helpful discussions. 
Special thanks are given to H. Atobe for pointing out some mistakes in an earlier version and to M. Tsuzuki 
for guiding the second author on the computation in Section 2.2.

\section{Miyawaki type lift for $GSpin(2,10)$}\label{GS} 
In this section we prove the non-vanishing of the Miyawaki type lift constructed in \cite{KY1}. We showed that under the assumption of non-vanishing, it is a Hecke eigen cusp form.
After these works the authors generalized the main theorems in \cite{KY} and hence the Miyawaki type lift can be generalized in an obvious way.
However we treat the non-vanishing separately for level one and higher level cases because of the 
nature of the construction. 
 
\subsection{Level one}
Let $f\in S_{2k}(SL_2(\Bbb Z)), g\in S_{2k+8}(SL_2(\Bbb Z))$ be Hecke eigen cusp forms.
Let $\mathcal F_{f,g}$ be the Miyawaki lift constructed in \cite{KY1}. It is defined as an integral:

\begin{equation}\label{integral}
\mathcal{F}_{f,g}(Z)=\int_{SL_2(\Z)\backslash \Bbb H} F_f\begin{pmatrix} Z&0\\0&\tau\end{pmatrix} \overline{g(\tau)}
({\rm Im} \tau)^{2k+6}\, d\tau,
\end{equation}
where $F_f$ is the Ikeda type lift constructed in \cite{KY}.

In \cite[Section 8]{KY1}, we wrote it as
\begin{equation}\label{ep1}
\mathcal F_{f,g}(Z)=\sum_{S} A_S e^{2\pi i Tr(TS)},
\end{equation}
where $S\in \frak J_2(\Bbb Z)_+$ (see \cite[(8.1)]{KY1}), and
\begin{equation}\label{ep2}
A_S=\int_{SL_2(\Bbb Z)\backslash \Bbb H} F_S(\tau,0)\overline{g(\tau)} Im(\tau)^{2k+8} \, d^*\tau,
\end{equation}
where $d^*\tau=\frac {dx dy}{y^2}$ is the invariant measure on $\Bbb H$. (See Section 8 of \cite{KY1}.)

If $S={\bold 1}_2\in \frak J_2(\Bbb Z)_{+}$, then by \cite[Appendix]{KY} (see also the corrections in Section 2 of \cite{KY2}), 
the quadratic form $\sigma_S$ associated to $S$ is 
of type $E^{\oplus 2}_8$ and then  
we can show that $\Xi(S)=\{0\}$ in \cite[(9.3)]{KY}. 
Note that for the basis $\{\alpha_i\}_{i=0}^7$  defining the integral Cayley numbers (see Section 2 of \cite{KY}), 
$\sigma_S=V\perp V\simeq E_8\perp E_8$ where $V$ is the quadratic space over $\Z$ given by 
$$\left(
\begin{array}{cccccccc}
 2 & 0 & 0 & 0 & 0 & -1 & -1 & -1 \\
 0 & 2 & 0 & 0 & 1 & -1 & 1 & 0 \\
 0 & 0 & 2 & 0 & 1 & 0 & -1 & 1 \\
 0 & 0 & 0 & 2 & 1 & 1 & 0 & -1 \\
 0 & 1 & 1 & 1 & 2 & 0 & 0 & 0 \\
 -1 & -1 & 0 & 1 & 0 & 2 & 0 & 0 \\
 -1 & 1 & -1 & 0 & 0 & 0 & 2 & 0 \\
 -1 & 0 & 1 & -1 & 0 & 0 & 0 & 2
\end{array}
\right)$$
whose determinant is one. 
Then 
from the formula \cite[(9.4)]{KY}, 
$$F_{S,0}(\tau)=\sum_{N>0} N^{\frac {2k-9}2}\prod_p \tilde{f}^p_{S,N}(\alpha_p) e^{2\pi i N\tau}.
$$
By the formula in \cite{Ka}, if $S={\bold 1}_2$, 
$$f^p_{S,N}(X)=\frac {1-X^{v_p(N)+1}}{1-X}.
$$
Hence $\tilde{f}^p_{S,N}=X^{-v_p(N)}+X^{-v_p(N)+2}+\cdots+X^{v_p(N)}$. So $F_{S,0}(\tau)=f(\tau)$.
Therefore,
$$F_S(\tau,0)=f(\tau)\theta(\tau),
$$
where $\theta$ is a theta function in 16 variables, and hence a modular form of weight 8 with respect to $SL_2(\Bbb Z)$.
Since dim $S_{8}(SL_2(\Bbb Z))=1$, $\theta(\tau)=E_8(\tau)$.
Here
$$E_8(\tau)=\sum_{\gamma\in \Gamma_\infty\backslash SL_2(\Bbb Z)} j(\gamma,\tau)^{-8},
$$
where $\Gamma_\infty=\left\{ \begin{pmatrix} a&b\\0&d\end{pmatrix}\in SL_2(\Bbb Z)\right\}$, and $j(\gamma,\tau)=c\tau+d$ for $\gamma=\begin{pmatrix} a&b\\c&d\end{pmatrix}\in SL_2(\Bbb Z)$. Hence
$$A_{{\bold 1}_2}=\int_{SL_2(\Bbb Z)\backslash \Bbb H} 
f(\tau)\overline{g(\tau)} E_8(\tau)Im(\tau)^{2k+8} \, d^*\tau.
$$
Then by the usual unfolding method,
\begin{eqnarray*}
&& A_{{\bold 1}_2}=\int_{SL_2(\Bbb Z)\backslash \Bbb H} \sum_{\gamma\in \Gamma_\infty\backslash SL_2(\Bbb Z)}
f(\gamma \tau)\overline{g(\gamma \tau)} Im(\gamma\tau)^{2k+8} \, d^*\tau\\
&& \phantom{xxx}=\int_{\Gamma_\infty\backslash SL_2(\Bbb Z)} f(\tau)\overline{g(\tau)} y^{2k+8} d^*\tau \\
&& \phantom{xxx}=\int_0^\infty y^{2k+6} \left(\int_0^1 f(x+iy)\overline{g(x+iy)} dx\right) dy.
\end{eqnarray*}

Let $f(\tau)=\sum_{n=1}^\infty a(n) e^{2\pi i n \tau}$, and $g(\tau)=\sum_{n=1}^\infty b(n) e^{2\pi i n\tau}$. Then
$$
\int_0^1 f(x+iy)\overline{h(x+iy)} dx=\sum_{n=1}^\infty a(n)\overline{b(n)} e^{-4\pi n y}.
$$
Therefore,
\begin{eqnarray*}
 A_{{\bold 1}_2}=\sum_{n=1}^\infty a(n)\overline{b(n)} \int_0^\infty y^{2k+6} e^{-4\pi n y}\, dy =(4\pi)^{-2k-7}\Gamma(2k+7) \sum_{n=1}^\infty \frac {a(n)\overline{b(n)}}{n^{2k+7}}.
\end{eqnarray*}

Let $L(s,f\otimes \bar g)$ be the Rankin-Selberg $L$-function:
$$L(s,f\otimes \bar g)=\zeta(2s) \sum_{n=1}^\infty \frac {a(n)\overline{b(n)}}{n^{s+2k+3}}=\prod_p \prod_{i=1}^2\prod_{j=1}^2 (1-\alpha_{f,i}(p)\alpha_{g,j}(p)p^{-s})^{-1},
$$
where $\alpha_{f,i}(p), \alpha_{g,j}(p)$ are roots of 
$X^2-\frac {a(p)}{p^{k-\frac 12}} X+1=0$, $X^2-\frac {b(p)}{p^{k+\frac 72}} X+1=0$, resp. 

Hence
$$L(4,f\otimes \bar g)=\zeta(8) \sum_{n=1}^\infty \frac {a(n)\overline{b(n)}}{n^{2k+7}}.
$$

Now $L(s,f\otimes\bar g)$ converges absolutely for $Re(s)>1$ and it has the Euler product. Hence 
it is non-vanishing for $Re(s)>1$. Therefore $A_{{\bold 1}_2}$ is non-vanishing.
Hence we have proved

\begin{theorem} Let $\mathcal{F}_{f,g}$ be the Miyawaki type lift as above. Then it is non-zero.
\end{theorem}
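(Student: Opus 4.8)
The plan is to reduce the non-vanishing of $\mathcal{F}_{f,g}$ to that of a single Fourier coefficient. From the expansion (\ref{ep1}), $\mathcal{F}_{f,g}(Z)=\sum_S A_S\,e^{2\pi i\mathrm{Tr}(TS)}$ vanishes identically if and only if every $A_S$ vanishes, so it suffices to exhibit one index $S$ with $A_S\neq0$. I would take $S=\mathbf{1}_2$, whose associated quadratic form $\sigma_S$ is the even unimodular lattice $E_8^{\oplus2}$; the virtue of this choice is that unimodularity forces the companion theta series to be a weight-$8$ modular form on the full modular group, which the one-dimensionality of $M_8(SL_2(\Bbb Z))$ then determines outright.

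Concretely, the steps are as follows. First, evaluate $F_S(\tau,0)$ for $S=\mathbf{1}_2$: the formula \cite[(9.4)]{KY} together with Kitaoka's local density computation \cite{Ka} causes the local factors $\tilde f^p_{S,N}$ to collapse, giving $F_{S,0}(\tau)=f(\tau)$ and hence $F_{\mathbf{1}_2}(\tau,0)=f(\tau)\theta(\tau)$, where $\theta$ is the $16$-variable theta series of $E_8^{\oplus2}$. Since $\dim M_8(SL_2(\Bbb Z))=1$ and $\theta$ has constant term $1$, necessarily $\theta=E_8$. Inserting this into (\ref{ep2}) and unfolding the Eisenstein series over $\Gamma_\infty\backslash SL_2(\Bbb Z)$ collapses the integral to $\int_0^\infty y^{2k+6}\big(\int_0^1 f\overline{g}\,dx\big)\,dy$.

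The last step is to recognize the unfolded integral as an $L$-value. Substituting the Fourier expansions of $f$ and $g$ and performing the elementary $x$- and $y$-integrations gives $A_{\mathbf{1}_2}$ as a nonzero Gamma-factor times $\sum_{n\geq1} a(n)\overline{b(n)}\,n^{-(2k+7)}$, which is $\zeta(8)^{-1}L(4,f\otimes\bar g)$. Since the Rankin-Selberg $L$-function has an absolutely convergent Euler product for $\mathrm{Re}(s)>1$ and $4$ lies well inside this range, no Euler factor can vanish, so $L(4,f\otimes\bar g)\neq0$ and therefore $A_{\mathbf{1}_2}\neq0$. This forces $\mathcal{F}_{f,g}\not\equiv0$.

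I expect the genuine obstacle to be the opening step rather than the endgame: one must check that for the unimodular index $S=\mathbf{1}_2$ the local densities degenerate \emph{exactly} into the Hecke eigenvalues of $f$, so that $F_{S,0}=f$ identically and no stray Euler factor survives. The identification $\theta=E_8$, the unfolding, and the non-vanishing of the $L$-value are all routine once this degeneration is in hand; indeed, the entire point of selecting a unimodular $S$ is to trivialize the theta contribution and make this collapse happen.
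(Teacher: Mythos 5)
Your proposal follows essentially the same route as the paper's own proof: choosing $S=\mathbf{1}_2$ so that $\sigma_S\simeq E_8^{\oplus 2}$, collapsing the local factors via \cite{Ka} to get $F_{\mathbf{1}_2}(\tau,0)=f(\tau)E_8(\tau)$, unfolding against the weight-$8$ Eisenstein series, and invoking the absolutely convergent Euler product of $L(s,f\otimes\bar g)$ at $s=4$. The argument is correct (indeed your appeal to $\dim M_8(SL_2(\Bbb Z))=1$ with constant term $1$ is the precise form of what the paper states, and note that \cite{Ka} is a result of Karel rather than Kitaoka), so there is nothing further to add.
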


\subsection{Higher level}  

Let $N,k$ be positive integers and 
$\G_0(N)=\left\{ \begin{pmatrix} a&b\\c&d\end{pmatrix}\in SL_2(\Bbb Z):\, c\equiv 0 (\text{mod $N$})\right\}$. 
We denote by $S_k(\G_0(N))$ the space of elliptic cusp forms of weight $k$ with respect to $\G_0(N)$. 
Throughout this section we keep this notation. 

Let $f\in S_{2k}(\G_0(N)), g\in S_{2k+8}(\G_0(N))$ be two newforms. 
Let $\varphi_f=\otimes'_p f_p,\ \varphi_g=\otimes'_p g_p$ be the decomposition of cuspidal automorphic forms 
associated to $f,g$ where each component at $p$ is chosen as a local newform defined in \cite{Schmidt} so that 
it takes the value 1 at the identity $I_2$ when $p$ is an unramified place. Let $\pi_{f}=\otimes'_p \pi_{f,p}, 
\pi_{g}=\otimes'_p \pi_{g,p}$ be the cuspidal automorphic representations generated by $\varphi_f,\varphi_g$ respectively. 
Applying Theorem 1.1 of \cite{KY2} with $\otimes_{p<\infty}\pi_{f,p}$ and using the classical interpretation (cf. Section 5.1 of \cite{KY}),  
we have the Ikeda type lift $F_f$. As in the case of level one, we can also define 
the Miyawaki type lift $\mathcal F_{f,g}$ by means of the integral given in \cite[(1.1)]{KY1} as follows:
\begin{equation}\label{integral}
\mathcal{F}_{f,g}(Z)=\int_{\G_0(N)\backslash \Bbb H} F_f\begin{pmatrix} Z&0\\0&\tau\end{pmatrix} \overline{g(\tau)}
({\rm Im} \tau)^{2k+6}\, d\tau.
\end{equation}
By the definition it is easy to see that 
$\mathcal{F}_{f,g}$ is a modular form for a congruence subgroup of $GSpin(2,10)(\Bbb Z)$.   

\begin{theorem}\label{gspin-nt} Keep the notations as above. 
Suppose that $N$ is square free. Then the Miyawaki type lift $\mathcal{F}_{f,g}$ is non-vanishing.
\end{theorem}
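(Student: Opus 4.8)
The plan is to reproduce the level-one argument in the adelic setting, replacing the classical unfolding of $E_8$ by the adelic Rankin--Selberg method of \cite{T}. The lattice $E^{\oplus 2}_8$ underlying $S=\mathbf{1}_2$ is independent of the level, so its theta series is still the level-one weight-$8$ form $\theta=E_8$, and exactly as before one reduces the relevant Fourier coefficient to
$$A_{\mathbf{1}_2}=\int_{\Gamma_0(N)\ba\Bbb H} F_{S,0}(\tau)\,\overline{g(\tau)}\,E_8(\tau)\,(\mathrm{Im}\,\tau)^{2k+8}\,d^*\tau.$$
The first point to check is that the normalized Fourier--Jacobi coefficient $F_{S,0}$ equals the newform $f$. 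This is a purely local statement: at the primes $p\nmid N$ it follows verbatim from the formula of \cite{Ka} as in the level-one case, and at the primes $p\mid N$ it is read off from the local-newvector normalization of \cite{Schmidt} that is built into the definition of $F_f$ through Theorem 1.1 of \cite{KY2}.

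I would then pass to the adelic picture, realizing $f\,E_8$ and $g$ through their associated automorphic forms and rewriting $A_{\mathbf{1}_2}$ as a global Rankin--Selberg integral
$$A_{\mathbf{1}_2}\doteq \int_{\mathrm{GL}_2(\Q)\ba\mathrm{GL}_2(\A)^1}\varphi_f(h)\,\overline{\varphi_g(h)}\,E(h,s_0)\,dh,$$
where $E(h,s)$ is the Eisenstein series attached to $E_8$, built from the spherical section at every place, $s_0$ is the analogue of $s=4$, and $\doteq$ denotes equality up to an explicit nonzero archimedean constant (the gamma factor already seen in level one). Standard unfolding factors this integral as a product of local zeta integrals times the partial Rankin--Selberg $L$-function $L^S(s_0,\pi_f\times\widetilde\pi_g)$, the adelic avatar of $L(4,f\otimes\bar g)$; at each $p\nmid N$ the local integral reproduces the correct local $L$-factor, again following \cite{T}.

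The crux is the local computation at the ramified places $p\mid N$. Here I would insert the chosen newvectors $f_p,g_p$ against the spherical section defining $E(h,s)$ and show that the resulting local zeta integral is a \emph{nonzero} multiple of the local $L$-factor. Because $N$ is square free, each $\pi_{f,p}$ and $\pi_{g,p}$ has conductor exactly $p$, so the local representation is either an unramified twist of the Steinberg representation or a ramified principal series of conductor $p$; in both cases the local newvector is explicit and the integral collapses to a short computable expression. The main obstacle is precisely to verify that pairing the ramified newvectors against the \emph{unramified} section does not accidentally annihilate the contribution---this non-vanishing is the computation guided by \cite{T}, and the square-free hypothesis is what confines the local representations to the tractable conductor-$p$ range.

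Assembling the pieces, $A_{\mathbf{1}_2}$ is a product of these nonzero ramified factors times $L^S(s_0,\pi_f\times\widetilde\pi_g)$. Since the completed Rankin--Selberg $L$-function has an absolutely convergent Euler product at $s_0$ (lying in the region $\mathrm{Re}(s)>1$), it is nonzero there, and deleting the finitely many nonzero Euler factors at $p\mid N$ to pass to $L^S$ preserves non-vanishing. Hence $A_{\mathbf{1}_2}\neq 0$, so the Fourier--Jacobi expansion of $\mathcal{F}_{f,g}$ is nontrivial and $\mathcal{F}_{f,g}\neq 0$.
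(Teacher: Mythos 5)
Your proposal is correct and follows essentially the same route as the paper: reduce to the Fourier coefficient $A_{\mathbf{1}_2}$ via $F_{\mathbf{1}_2}(\tau,0)=f(\tau)E_8(\tau)$, adelize and unfold against the Eisenstein series built from spherical sections, compute the unramified local integrals via the standard Rankin--Selberg formula, handle the places $p\mid N$ (where square-freeness forces conductor $p$, in fact unramified twists of Steinberg) by the computation in Tsuzuki's Lemma 2.14, evaluate the archimedean factor, and conclude from non-vanishing of $L(s,\pi_f\times\pi_g)$ in the region of absolute convergence at $s=4$. The only cosmetic difference is that the paper carries out the ramified local integral explicitly, obtaining $[K_p:K_0(p)]^{-1}p^4\zeta_p(8)^{-1}L(4,\pi_{f,p}\times\pi_{g,p})$, where you defer to the same reference.
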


\begin{remark} As in \cite{KY1}, we may show that $\mathcal{F}_{f,g}$ is a Hecke eigen form and hence gives rise to a cuspidal representation of $GSpin(2,10)$.
\end{remark}

Since we have a non-trivial level, we should be careful with the fact that a priori we do not have an explicit form 
as in the case of level one. 
However by virtue of Lemma 7.1 and Lemma 7.3 of \cite{KY2}, we see that  
$$F_{\textbf 1_2}(\tau,0)=f(\tau)\theta(\tau)
$$
where $\theta(\tau)=E_8(\tau)$ is as before and the left hand side is defined similarly as in (\ref{ep1}),(\ref{ep2}). 
As in level one case, we have 
$$A_{{\bold 1}_2}=
\int_{\G_0(N)\backslash \Bbb H} f(\tau)\overline{g(\tau)} E_8(\tau){\rm Im}(\tau)^{2k+8} \, d^*\tau.
$$
Theorem \ref{gspin-nt} will follow from the following:
\begin{theorem}\label{rs-gspin1}
There exists a non-zero constant $C$ such that 
$$A_{{\bold 1}_2}=C\cdot L(4,\pi_f\times \pi_g)\neq 0,
$$
where $L(s,\pi_f\otimes\pi_g)$ is the Rankin-Selberg convolution for cuspidal representations $\pi_f,\pi_g$ 
attached to $f,g$ respectively. 
\end{theorem}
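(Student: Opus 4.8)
The plan is to pass to the adelic setting, as indicated in the introduction, since the holomorphic Eisenstein series $E_8$ has full level $SL_2(\Z)$ while the period is taken over the smaller quotient $\G_0(N)\bs \Bbb H$, so that a naive classical unfolding over $\G_\infty\bs SL_2(\Z)$ is no longer compatible with the domain. Following \cite{T}, I would first identify $A_{{\bold 1}_2}$ with a global Rankin--Selberg integral
$$A_{{\bold 1}_2}=\int_{GL_2(\Q)Z(\A)\bs GL_2(\A)}\varphi_f(g)\,\overline{\varphi_g(g)}\,E(g,s_0)\,dg,$$
where $\varphi_f,\varphi_g$ are the automorphic forms attached to $f,g$ built out of the local newvectors of \cite{Schmidt}, and $E(g,s)$ is the adelic Eisenstein series on $GL_2$ whose component at the special point $s_0$ recovers the weight $8$ holomorphic Eisenstein series $E_8$, with $s_0$ chosen so that the spectral parameter matches $s=4$ in the normalization fixed above. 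The main bookkeeping here is to check that the classical integrand $f\,\overline{g}\,E_8\,(\mathrm{Im}\,\tau)^{2k+8}$ is exactly the restriction to $\Bbb H$ of this adelic integrand and that the measures agree up to an explicit constant; I would also record that, because $E_8$ has level one, the corresponding adelic section is spherical at every finite place, so the only ramification is carried by $\varphi_f,\varphi_g$ at the primes $p\mid N$.

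Next I would unfold. Using the standard Jacquet--Rankin--Selberg method, I replace $E(g,s)$ by its defining sum over $\G_\infty\bs GL_2$, collapse the quotient, and insert the Whittaker/Fourier expansion of $\varphi_f$ and $\overline{\varphi_g}$; cuspidality legitimizes the unfolding. This expresses the integral as an Euler product $A_{{\bold 1}_2}=\prod_v Z_v(s_0)$ of local zeta integrals $Z_v$ of the local Whittaker functions against the local section.

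Then I would compute the local factors. At the archimedean place and at every finite $p\nmid N$ the newvectors are the normalized spherical (respectively lowest weight) vectors, and the local zeta integral evaluates to the local Rankin--Selberg $L$-factor $L(4,\pi_{f,p}\times\pi_{g,p})$, up to the archimedean $\Gamma$-factor and the $\zeta(2s)$-type normalization already seen classically; all of these are absorbed into the constant. At the ramified primes $p\mid N$, the square-free hypothesis forces $\pi_{f,p}$ and $\pi_{g,p}$ to be unramified twists of the Steinberg representation, whose newvectors are written down explicitly in \cite{Schmidt} while the Eisenstein section remains spherical; I would then compute $Z_p(s_0)$ directly and check that it is a nonzero multiple of the ramified local $L$-factor. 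Collecting the archimedean constant, the $\zeta$-normalization, and the finitely many ramified contributions into a single constant $C$, one obtains $A_{{\bold 1}_2}=C\cdot L(4,\pi_f\times\pi_g)$ with $C\neq 0$. Finally, as in the level one case, $L(s,\pi_f\times\pi_g)$ has an Euler product that is absolutely convergent for $\mathrm{Re}(s)>1$, so each of its factors---unramified and ramified alike---is a nonvanishing convergent product at $s=4$; hence $L(4,\pi_f\times\pi_g)\neq 0$ and therefore $A_{{\bold 1}_2}\neq 0$.

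The step I expect to be the main obstacle is the ramified local computation at the primes $p\mid N$: verifying that $Z_p(s_0)$, the local zeta integral of the explicit Steinberg newvectors against the spherical section, is nonzero and equals the correct local $L$-factor up to a nonvanishing constant. This is precisely where square-freeness is used, since it pins the local representations down to twists of Steinberg with a single, explicitly known newvector, keeping the local integrals computable and, crucially, nonzero.
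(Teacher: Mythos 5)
Your proposal follows essentially the same route as the paper: adelize the period, unfold the level-one (spherical) Eisenstein section by the Jacquet--Rankin--Selberg method, evaluate the archimedean and unramified local zeta integrals via the standard formula (Proposition 3.8.1 of \cite{Bump}), treat the primes $p\| N$ by pairing the Steinberg newvectors of \cite{Schmidt} against the spherical section, and deduce non-vanishing from the absolute convergence of the Euler product at $s=4>1$. The only detail the paper adds beyond your outline is the explicit ramified computation, done as in \cite{T} by writing the spherical section as a sum of two translates of the $K_0(p)$-section, which yields the nonzero constant $[K_p:K_0(p)]^{-1}p^4\zeta_p(8)^{-1}$ times the local $L$-factor.
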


\begin{proof} We work on adelic forms. Let $K_p=GL_2(\Z_p)$ and 
$K_0(p^r)=\left\{\begin{pmatrix} a&b\\c&d\end{pmatrix}\in K_p :\, c\equiv 0 (\text{mod $p^r$})\right\}$. 

Let $\Phi_p$ be the local section in the principal series $\pi_p(|\cdot|^{\frac{7}{2}},|\cdot|^{-\frac{7}{2}})$ defined by, 
if $p\neq \infty$, 
$$\Phi_p(g)=\left\{
\begin{array}{cc}
|ad^{-1}|^4_p & {\rm if}\ 
g\in \left(\begin{array}{cc}
a & \ast \\
0 & d 
\end{array}\right)K_0(p^{{\rm ord}_p(N)}) \\
0 & {\rm otherwise.}
\end{array}
\right.
$$
Similarly we also define a spherical vector $\Phi^{{\rm ur}}_p$ by replacing $K_0(p^{{\rm ord}_p(N)})$ with $K_p$. 
If $p=\infty$, we define $\Phi_\infty(g)=\Phi^{{\rm ur}}_\infty(g)=
|ad^{-1}|^4 e^{\sqrt{-1}\theta k}$ if 
$g=pk_\theta\in \left(\begin{array}{cc}
a & \ast \\
0 & d 
\end{array}\right)K_\infty$, 
where $k_\theta=\left(\begin{array}{cc}
\cos \theta & -\sin \theta \\
\sin \theta & \cos \theta 
\end{array}\right)\in K_\infty\simeq SO(2)$. 

Put $\Phi^{\rm ur}=\otimes'_p \Phi^{\rm ur}_p$ and $\Phi=\otimes'_p \Phi_p$. The global section $\Phi^{\rm ur}$ gives rise to $E_8(\tau)$, the Eisenstein series of level one, while the global section $\Phi$ gives rise to the Eisenstein series $E_8^{(N)}(\tau)=\ds\sum_{\gamma\in \Gamma_\infty\backslash \G_0(N)} j(\gamma,\tau)^{-8}$ for $\G_0(N)$ with respect to 
the cusp $\infty$.

By the usual unfolding method, there exists a non-zero constant $C'$ depending on the normalization at all 
Steinberg places and at the infinite place such that  
\begin{eqnarray*}
&& A_{{\bold 1}_2}=C'\int_{Z(\A_\Q)GL_2(\Q)\bs GL_2(\A_\Q)}\sum_{\gamma\in B(\Q)\bs GL_2(\Q)}\Phi^{{\rm ur}}(\gamma g)\varphi_f(g)
\overline{\varphi_g(g)}dg   \\
&&\phantom{xxx}=C'\prod_p\int_{PGL_2(\Q_p)}\Phi^{{\rm ur}}_p(g)\varphi_{f,p}(g)\overline{\varphi_{g,p}(g)}dg. 
\end{eqnarray*}
For any unramified prime $p$, by Proposition 3.8.1 of \cite{Bump} we have 
$$\int_{PGL_2(\Q_p)}\Phi^{{\rm ur}}_p(g)\varphi_{f,p}(g)\overline{\varphi_{g,p}(g)}dg=\zeta_p(8)^{-1}L(4,\pi_{f,p}\times \pi_{g,p}).$$
For a bad place $p|N$ (hence $p||N$ by the assumption), by applying computation in the proof of Lemma 2.14 of \cite{T} 
(see line -7 through the bottom in page 22 of loc.cit.), we have 

\begin{eqnarray*}
&& \int_{`GL_2(\Q_p)}\Phi^{{\rm ur}}_p(g)\varphi_{f,p}(g)\overline{\varphi_{g,p}(g)}dg  \\
&&=\int_{PGL_2(\Q_p)}\Phi_p(g)\varphi_{f,p}(g)\overline{\varphi_{g,p}(g)}dg+
\int_{PGL_2(\Q_p)}\Phi_p(\left(\begin{array}{cc}
p & 0 \\
0 & 1 
\end{array}\right)g)\varphi_{f,p}(g)\overline{\varphi_{g,p}(g)}dg. \\
&&=[K_p,K_0(p)]^{-1}p^4 \zeta_p(8)^{-1}L(4,\pi_{f,p}\times \pi_{g,p}). 
\end{eqnarray*}

For $p=\infty$, it is well-known (cf. p.136 of \cite{KMV}) that 
$$C_\infty:=\int_{PGL_2(\R)}\Phi_\infty(g)\varphi_{f,\infty}(g)\overline{\varphi_{g,\infty}(g)}dg=\zeta_\infty(8)^{-1}
\Gamma(8)\Gamma(2k+7)=\Gamma(2k+7).
$$
Put $C=C'C_\infty\zeta^N(8)^{-1}[SL_2(\Z):\G_0(N)]^{-1}N^4$ where $\zeta^N(s)=\ds\prod_{p\nmid N}\zeta_p(s)$ is the 
partial Riemann zeta function outside $N$. Then we have $A_{\bold{1}_2}=C\cdot L(4,\pi_f\times \pi_g)$. 
Now $L(s,\pi_f\times \pi_g)$ converges absolutely for $Re(s)>1$ and it has the Euler product. Hence 
it is non-vanishing for $Re(s)>1$. Therefore $A_{{\bold 1}_2}$ is non-vanishing.
\end{proof}

\section{Miyawaki lift for Siegel cusp forms}\label{Ikeda}

In this section we assume that the readers are familiar with notations and results in \cite{Ik} and \cite{Ik1}. 

Let $h(\tau)\in S_{k+\frac 12}^+(\Gamma_0(4))$ be a Hecke eigenform in Kohnen's plus space corresponding to a Hecke eigenform $f(\tau)\in S_{2k}(SL_2(\Bbb Z))$.
Let $n, r$ be positive integers such that $n+r\equiv k$ mod 2. Then we have the Ikeda lift $F_f\in S_{k+n+r}(Sp_{4n+4r}(\Bbb Z))$ whose standard $L$-function is 
$$\zeta(s)\prod_{k=1}^{2n+2r} L(s+k+n+r-i,f).
$$
Now for $g\in S_{k+n+r}(Sp_{2r}(\Bbb Z))$, the Miyawaki lift is given by
$$\mathcal F_{f,g}(Z)=\int_{Sp_{2r}(\Bbb Z)\backslash \Bbb H_r} F_f\begin{pmatrix} Z&0\\0&W\end{pmatrix} \overline{g^c(W))} det(Im W)^{k+n-1}\, dW,
$$
where $Z\in \Bbb H_{2n+r}$, and $g^c(W)=\overline{g(-\overline{W})}$. Ikeda \cite{Ik1} showed that if the integral is non-vanishing, 
$\mathcal F_{f,g}$ is a Hecke eigenform in $S_{k+n+r}(Sp_{4n+2r}(\Bbb Z))$.
Now we have the Fourier-Jacobi expansion of $F_f$:
$$
F_f\begin{pmatrix} Z&u\\ {}^t u&W\end{pmatrix}=\sum_S \mathcal F_S(W,u) e^{2\pi i Tr(ZS)},
$$
where $S\in S_{2n+r}'(\Bbb Z)$, and $\mathcal F_S$ is a Fourier-Jacobi coefficient of index $S$. Then
$$\mathcal F_S(W,u)=\sum_{\lambda\in \Lambda} \theta_{[\lambda]}(S;W,u) \mathcal F_{S,\lambda}(W),
$$
where $\Lambda=(2S)^{-1}\Bbb Z^{2n+r}/\Bbb Z^{2n+r}$, and $\theta_{[\lambda]}(S;W,u)$ is a theta series of $2n+r$ variables.

Then
$$\mathcal F_{f,g}(Z)=\int_{Sp_{2r}(\Bbb Z)\backslash \Bbb H_r} \left(\sum_S \mathcal F_S(W,0) e^{2\pi i Tr(ZS)}\right)
\overline{g^c(W))} \det({\rm Im} W)^{k+n-1}\, dW
=\sum_S A_S e^{2\pi i Tr(ZS)},
$$
where 
$$A_S=\int_{Sp_{2r}(\Bbb Z)\backslash \Bbb H_r} \mathcal F_S(W,0)\overline{g^c(W)} det(Im W)^{k+n-1}\, dW.
$$
Now in order to interchange the sum and the integral, we need to show that the integral is absolutely convergent. We follow Lemma 7.1 of \cite{KY}: We have
$$\mathcal F_S(W,0) e^{-2\pi Tr(Y S)}=\int_{X} F_f\begin{pmatrix} Z&0\\ 0&W\end{pmatrix}\, e^{-2\pi i Tr(X S)}\, dX.
$$
Setting $Y=\frac 1{Tr(S)} I_{2n+r}$, we have $|\mathcal F_S(W,0)|\ll  (Im W)^{-(k+n+r)} Tr(S)^{2(k+n+r)}.$
Then for a fixed $Z$,
$$\sum_S |\mathcal F_S(W,0) e^{2\pi i Tr(Z S)}|\leq (Im W)^{-(k+n+r)} \sum_S Tr(S)^{2(k+n+r)} e^{-2\pi Tr(Y S)}.
$$
Now 
$Tr(YS)\geq c_Y Tr(S)$ for a constant $c_Y>0$. Hence 
$$
\sum_S Tr(S)^{2(k+n+r)} e^{-2\pi Tr(Y S)}\leq \sum_S Tr(S)^{2(k+n+r)} e^{-2\pi c_Y Tr(S)},
$$
which is bounded. Therefore,
$$\int_{Sp_{2r}(\Bbb Z)\backslash \Bbb H_r} \left(\sum_S \mathcal F_S(W,0) e^{2\pi i Tr(ZS)}\right)
\overline{g^c(W))} det(Im W)^{k+n-1}\, dW,
$$ 
converges absolutely.

Now we consider the special case: $r$ is even and $2n+r$ is a multiple of 8. 
Choose $S$ so that $2S$ is an even unimodular matrix. Then $\Lambda$ is trivial, and
$\mathcal F_S(W,0)=H(W)\theta_S(W)$, where $H(W)$ is a Siegel cusp form of weight $k+\frac r2$ and 
$\theta_S(W):=\theta_{[0]}(S;W,0)$ is the theta function for $S$ and the trivial class $[0]$, which is a Siegel modular form of weight 
$n+\frac r2$ and level one. By using Lemma 7.1 and 7.2 (see also Corollary 7.3-(2),(3)) and Lemma 7.7 of \cite{IY}, one can show that 
$H(W)$ is the Ikeda lift $F^{(r)}_f$ of $f$ to $Sp_{2r}$. 

Now we apply the Siegel formula.
\begin{theorem}[Siegel]
For $8| m$, $C_m$ be the set of classes (up to isomorphism) of even unimodular lattices of $m$ variables, and let $g_S\in C_m$ be the order of the automorphism group of $S\in C_m$. Let $E_{2k}(Z)=\sum_{(C,D)} det(CZ+D)^{-2k}$ be the usual Siegel Eisenstein series of weight $2k$.
 Then 
$$\sum_{S\in C_m} \frac 1{g_S} \theta_S=M_m E_{\frac m2},\quad \text{where $M_m=\sum_{S\in C_m} \frac 1{g_S}$}.
$$
\end{theorem}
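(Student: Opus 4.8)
The plan is to prove the identity by matching Fourier coefficients, since both sides are holomorphic Siegel modular forms of weight $m/2$ and of the same degree $g$ (the degree of the theta series $\theta_S$), so equality of all Fourier coefficients forces equality of the forms. First I would record that, for a positive semidefinite half-integral symmetric $g\times g$ matrix $T$, the $T$-th Fourier coefficient of $\theta_S$ is the representation number $r(S,T)$ of $T$ by the lattice $S$; hence the $T$-th coefficient of $\sum_{S\in C_m}\frac{1}{g_S}\theta_S$ is $\sum_{S\in C_m}\frac{1}{g_S}r(S,T)$, i.e. $M_m$ times the genus-averaged representation number $\frac{1}{M_m}\sum_S\frac{1}{g_S}r(S,T)$. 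As a sanity check, the $T=0$ coefficient of every $\theta_S$ is $1$ (positive-definiteness forces $S[X]=0\Rightarrow X=0$), so after dividing by $M_m$ the left side has constant term $1$, matching the constant term of $E_{m/2}$ coming from the identity coset in $\sum_{(C,D)}\det(CZ+D)^{-m/2}$.

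The two analytic inputs are then classical. On the left, Siegel's main theorem on quadratic forms (the Minkowski--Siegel mass formula) identifies the genus average $\frac{1}{M_m}\sum_S\frac{1}{g_S}r(S,T)$ with a product $\alpha_\infty(T)\prod_p\alpha_p(T)$ of local representation densities; since all even unimodular lattices of rank $m$ form a single genus, the finite densities $\alpha_p(T)$ are exactly those for representing $T$ by the standard unimodular $\mathbb{Z}_p$-lattice. On the right, the explicit Fourier expansion of the Siegel Eisenstein series (again due to Siegel) expresses the $T$-th coefficient of $E_{m/2}$ as the same product: the singular series $\prod_p\alpha_p(T)$ times an archimedean Gamma/volume factor that is exactly $\alpha_\infty(T)$. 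Comparing these two local products place by place gives $\frac{1}{M_m}\sum_S\frac{1}{g_S}r(S,T)$ equal to the $T$-th coefficient of $E_{m/2}$ for every $T$, which is the asserted identity.

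A cleaner conceptual route, closer to the adelic methods used elsewhere in the paper, is to invoke the Siegel--Weil formula for the dual pair $(\mathrm{Sp}_{2g},O(V))$ with $V=(\mathbb{Q}^m,S)$: since $V$ is positive definite, $O(V)(\mathbb{R})$ is compact, the theta integral over the adelic quotient $[O(V)]$ converges, and it equals a multiple of the Siegel Eisenstein series at the relevant special value; the quotient $[O(V)]$ is a finite disjoint union indexed by the classes of the single genus $C_m$, each weighted by $1/g_S$, and unwinding to classical language reproduces the formula. Either way, the main obstacle is concentrated in one deep input, the mass formula / Siegel--Weil identity itself; granting it, the remaining work is the bookkeeping of the archimedean density $\alpha_\infty(T)$ and the overall constant $M_m$, together with staying in the convergence range $m/2>g+1$ (equivalently $m>2g+2$) where the Eisenstein series is defined by its defining series. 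Outside that range one must first continue $E_{m/2}$ via Hecke's summation or Kudla--Rallis regularization, but in the anisotropic unimodular situation at hand the first-term identity persists, so the comparison of Fourier coefficients goes through unchanged.
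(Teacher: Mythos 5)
The paper never proves this statement: it is quoted as a classical result of Siegel and used as a black box, so there is no internal proof to measure yours against. Your sketch is the standard classical argument and is correct in outline. The key points are all in place: both sides are holomorphic Siegel modular forms of the same degree $g$ and weight $m/2$ for $Sp_{2g}(\Bbb Z)$; the $T$-th coefficient of the weighted theta average is the genus-averaged representation number (using that the even unimodular lattices of rank $m$ form a single genus, so the local densities at every finite $p$ are those of the standard unimodular $\Z_p$-lattice); Siegel's Hauptsatz identifies that average with $\alpha_\infty(T)\prod_p\alpha_p(T)$; and Siegel's computation of the Fourier coefficients of $E_{m/2}$ produces the same product. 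Two bookkeeping points deserve care if you write this out. First, the density comparison as you state it is cleanest for nondegenerate $T$; the positive semidefinite singular coefficients (rank of $T$ less than $g$) are handled either by the version of the main theorem valid for all semidefinite $T$ or by induction on the degree via Siegel's $\Phi$-operator, which maps $\theta_S$ in degree $g$ to $\theta_S$ in degree $g-1$ and $E_{m/2}$ to $E_{m/2}$. Second, your convergence caveat is the right one, and in the paper's application it is automatically satisfied: the formula is applied in degree $g=r$ with $m=2n+r$, so the weight $n+\tfrac r2$ exceeds $r+1$ exactly when $2n>r+2$, which is the same condition the paper already imposes for the absolute convergence of the Rankin convolution; the regularized (Kudla--Rallis) extension you mention is therefore not needed. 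Finally, your alternative adelic route via the Siegel--Weil formula for the pair $(Sp_{2g},O(V))$ with $V$ positive definite is not only valid but is precisely the mechanism the authors themselves invoke in Section 4, where the Hermitian analogue of this theorem is deduced from Ichino's Siegel--Weil formula for unitary groups; so that version of your argument aligns most closely with how the paper actually operates.
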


Now for $C_{2n+r}$, consider the sum
\begin{equation}\label{A_S}
\sum_{S\in C_{2n+r}} \frac 1{g_S} A_S.
\end{equation}
It is
\begin{equation}\label{Siegel-f}
M_{2n+r}\int_{Sp_{2r}(\Bbb Z)\backslash \Bbb H_r} H(W)\overline{g^c(W)}E_{n+\frac r2}(W) det(Im W)^{k+n+r}\, d^*W,
\end{equation}
where $d^*W=det(Im W)^{-(r+1)}dW$.
Let 
$$H(W)=F^{(r)}_f(W)=\sum_{T\in S_r(\Bbb Z)^+} a_{F^{(r)}_f}(T) e^{2\pi i Tr(TW)},\quad g(W)=\sum_{T\in S_r(\Bbb Z)^+} a_g(T) e^{2\pi i Tr(TW)}.
$$
Then by \cite{Kal}, the integral (\ref{Siegel-f}) is exactly a scalar multiple of the Rankin convolution
$$R(k+n+\frac {r-1}2,H,\bar g)=
\sum_{T\in \widetilde S_r(\Bbb Z)^+} \frac {a_{F^{(r)}_f}(T)\overline{a_g(T)}}{\epsilon(T) det(T)^{k+n+\frac {r-1}2}},
$$
where $\widetilde S_r(\Bbb Z)^+$ is the set of $GL_r(\Bbb Z)$-equivalence classes of matrices $T\in S_r(\Bbb Z)^+$, and 
$\epsilon(T)=\#\{ U\in GL_r(\Bbb Z) | \, UT {}^t U=T\}$. By \cite[Lemma 3.1]{Y}, the above series converges absolutely if $2n>r+2$. However, we do not know the non-vanishing. We assume
\begin{conjecture} \label{rankin} $R(k+n+\frac {r-1}2,H,\bar g)$ is non-vanishing. 
\end{conjecture}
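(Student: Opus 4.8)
The plan is to mirror the level-one argument of Section~\ref{GS}, where non-vanishing followed from two ingredients: the $L$-value in question possessed an Euler product, and its evaluation point lay in the region of absolute convergence. By \cite{Kal} the combination (\ref{Siegel-f}) already identifies the averaged Fourier coefficient with a non-zero scalar multiple of $R(k+n+\frac{r-1}2,H,\bar g)$, and by \cite[Lemma 3.1]{Y} this series converges absolutely once $2n>r+2$, so the second ingredient is in hand. What is missing is the first: the naive Rankin convolution $R(s,H,\bar g)=\sum_{T}a_H(T)\overline{a_g(T)}\,\epsilon(T)^{-1}\det(T)^{-s}$ of two Siegel cusp forms does \emph{not} manifestly factor as an Euler product, so absolute convergence alone does not force non-vanishing. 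The strategy is to supply an Euler-product structure by exploiting that $H=F^{(r)}_f$ is an Ikeda lift.

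Concretely, I would return to the integral (\ref{Siegel-f}), which is a Rankin--Selberg integral of $H\,\overline{g^c}$ against the Siegel Eisenstein series $E_{n+\frac r2}$. Instead of merely unfolding to the Dirichlet series (the content of \cite{Kal}), I would unfold adelically and apply the doubling / Langlands--Shahidi machinery to identify the integral, up to explicit local factors, with the automorphic Rankin--Selberg $L$-function $L(s,\pi_H\times\pi_g)$. The decisive input is that the Satake parameters of $H=F^{(r)}_f$ at every finite prime are explicit shifted copies of those of $f$ (its standard $L$-function being the product recorded in Section~\ref{Ikeda}); feeding these into the unfolded integral should factor $L(s,\pi_H\times\pi_g)$ through $L$-functions attached to $f$ and $g$, with the ramified and archimedean factors computed exactly as in the $p\mid N$ and $p=\infty$ steps of the proof of Theorem~\ref{rs-gspin1}. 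Each resulting local factor is a non-zero convergent Euler factor, which is the product structure we need.

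The hard part will be precisely this passage, which had no analogue in the $GL_2$ case. Two difficulties arise. First, identifying (\ref{Siegel-f}) with $L(s,\pi_H\times\pi_g)$ by doubling is delicate at the ramified and infinite places, and one must confirm that the resulting correction factors are themselves non-zero at the point $s_0=k+n+\frac{r-1}2$. Second, and more seriously, even granting such a factorization, the point $s_0$ need not lie in the region of absolute convergence of the \emph{automorphic} $L$-function, which may be strictly narrower than the convergence region $2n>r+2$ of the naive series. If $s_0$ falls inside the critical strip, or worse at a central value, then non-vanishing ceases to be an elementary consequence of the Euler product and becomes a genuinely deep problem, accessible only through special-value formulas or Gan--Gross--Prasad-type input (compare the route of \cite{At} via \cite{GGP}). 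I expect this second point to be the crux, and it is presumably the reason the statement is recorded here as Conjecture~\ref{rankin} rather than established outright.
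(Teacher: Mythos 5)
You have not produced a proof, and in fairness neither does the paper: the statement you were asked to prove is recorded there as Conjecture~\ref{rankin}, immediately after the sentence ``However, we do not know the non-vanishing,'' and the theorem that closes Section~\ref{Ikeda} is explicitly conditional on it. The paper's unconditional results live elsewhere, in situations (Section~\ref{GS}, and the $r=1$ case of Section~\ref{Unitary}) where the convolution degenerates to a genuine $GL_2$ Rankin--Selberg $L$-function with an Euler product, which is exactly the structure whose absence you correctly identify here. So the right assessment of your proposal is that it is a research plan whose decisive step you yourself leave open --- and your closing guess that this unresolved step ``is presumably the reason the statement is recorded here as Conjecture~\ref{rankin}'' is precisely correct.

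That said, the plan as outlined has concrete defects worth naming. The identification $R(s,H,\bar g)=L(s,\pi_H\times\pi_g)$ you hope to extract is not available: for genus $r\ge 2$ the naive Rankin--Selberg series of two Siegel eigenforms admits no Euler product at all, and $H=F^{(r)}_f$, being an Ikeda lift, is a non-generic (CAP) form, so the Rankin--Selberg and Langlands--Shahidi machinery for generic representations that you invoke does not apply to it. Moreover, the integral (\ref{Siegel-f}) pairs $H\,\overline{g^c}$ against a Siegel Eisenstein series on the \emph{same} group $Sp_{2r}$, so it is not a doubling integral either; unfolding it gives exactly the Dirichlet series of \cite{Kal}, i.e.\ it returns you to the object you started with. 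The expected resolution is of a different nature: by Ikeda-type period conjectures (cf.\ \cite{Ik1}) and the Gan--Gross--Prasad approach carried out by Atobe \cite{At} under \cite{GGP} --- the route the paper itself points to in its introduction --- quantities of this kind are governed by critical or central $L$-values, whose non-vanishing is a deep problem not reachable by absolute-convergence arguments. In other words, your ``second difficulty'' is not a side issue but the heart of the matter, and your suggestion that the ramified and archimedean computations would go through ``exactly as in Theorem~\ref{rs-gspin1}'' underestimates how little of that $GL_2$ argument transfers to the Siegel setting.
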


Under Conjecture \ref{rankin}, (\ref{A_S}) is non-vanishing. Then one of $A_S$ is non-vanishing. Hence we have proved:

\begin{theorem} Let $f(\tau)\in S_{2k}(SL_2(\Bbb Z))$, and $g\in S_{k+n+r}(Sp_{2r}(\Bbb Z))$ such that $2n+r$ is a multiple of 8. 
Then under Conjecture \ref{rankin}, the Miyawaki lift $\mathcal F_{f,g}$ is non-vanishing.
\end{theorem}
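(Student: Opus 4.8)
The plan is to reduce the non-vanishing of $\mathcal F_{f,g}$ to the non-vanishing of a single Fourier coefficient $A_S$, and then to detect such a coefficient by averaging over the finitely many even unimodular classes of rank $2n+r$. First I would invoke the Fourier-Jacobi expansion $\mathcal F_{f,g}(Z)=\sum_S A_S e^{2\pi i Tr(ZS)}$ set up above, the interchange of sum and integral being legitimate thanks to the bound $|\mathcal F_S(W,0)|\ll (Im W)^{-(k+n+r)} Tr(S)^{2(k+n+r)}$ coming from Lemma 7.1 of \cite{KY}. It then suffices to exhibit one index $S$ with $A_S\neq 0$.

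Since $r$ is even and $2n+r$ is a multiple of $8$, I would let $S$ range over the set $C_{2n+r}$ of even unimodular classes. For each such $S$ the lattice $\Lambda=(2S)^{-1}\Bbb Z^{2n+r}/\Bbb Z^{2n+r}$ is trivial, so the Fourier-Jacobi coefficient factors as $\mathcal F_S(W,0)=H(W)\theta_S(W)$ with one and the same cusp form $H(W)$ for every $S$. The key structural input is the identification of $H(W)$ with the Ikeda lift $F^{(r)}_f$ of $f$ to $Sp_{2r}$, which I would read off from Lemmas 7.1, 7.2, 7.7 and Corollary 7.3 of \cite{IY}; crucially $H$ does not depend on $S$, and this is exactly what allows the theta series to be averaged while the cuspidal factor is pulled out.

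I would then form the weighted sum $\sum_{S\in C_{2n+r}} g_S^{-1} A_S$, which is a finite combination and hence freely commutes with integration over $Sp_{2r}(\Bbb Z)\backslash \Bbb H_r$. Inside the integrand Siegel's formula replaces $\sum_S g_S^{-1}\theta_S$ by $M_{2n+r} E_{n+r/2}$, so the weighted sum equals $M_{2n+r}$ times the integral of $H(W)\overline{g^c(W)} E_{n+r/2}(W)$ against $\det(Im W)^{k+n+r}\, d^*W$. By \cite{Kal} this integral is a nonzero scalar multiple of the Rankin convolution $R(k+n+\frac{r-1}2,H,\bar g)$, whose absolute convergence in the relevant range $2n>r+2$ is furnished by Lemma 3.1 of \cite{Y}.

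Finally, Conjecture \ref{rankin} gives $R(k+n+\frac{r-1}2,H,\bar g)\neq 0$, so $\sum_{S\in C_{2n+r}} g_S^{-1} A_S\neq 0$; as the weights $g_S^{-1}$ are strictly positive, at least one $A_S$ must be non-zero, whence $\mathcal F_{f,g}\not\equiv 0$. The genuine obstacle is precisely the non-vanishing of this Rankin convolution, which is why it is isolated as Conjecture \ref{rankin}; conditional on it, the only delicate points are the $S$-independent identification of $H$ with $F^{(r)}_f$ via \cite{IY} and the verification that \cite{Kal} applies in the present setting to produce the Rankin convolution together with an explicitly nonzero constant.
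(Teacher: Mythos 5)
Your proposal is correct and follows essentially the same route as the paper: interchange of sum and integral via the bound from Lemma 7.1 of \cite{KY}, factorization $\mathcal F_S(W,0)=H(W)\theta_S(W)$ with $H=F^{(r)}_f$ identified through \cite{IY}, averaging over $C_{2n+r}$ with Siegel's formula, and reduction via \cite{Kal} to the Rankin convolution whose non-vanishing is exactly Conjecture \ref{rankin}. Your explicit remark that $H$ is independent of $S$ (which is what permits pulling the cusp form out of the theta average) is a point the paper uses only implicitly, but the argument is the same.
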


\section{Non-vanishing of Miyawaki lifts for $U(n,n)$}\label{Unitary}

We review the Miyawaki lift for the unitary group in \cite{AK}.
Let $K$ be an imaginary quadratic field with discriminant $-D$ and let $\mathcal O$ be the ring of integers. Let $\chi=\chi_D$ be the Dirichlet character corresponding to $K/\Bbb Q$.
Let $f$ be a normalized Hecke eigen cusp form belonging to $\begin{cases} S_{2k}(SL_2(\Bbb Z)), &\text{if $n$ odd}\\ S_{2k+1}(\Gamma_0(D),\chi), &\text{if $n$ even}\end{cases}$.

Ikeda \cite{Ik2} constructed a lift $I^{(n+2r)}(f)\in S_l(\Gamma_K^{(n+2r)}, \mbox{det}^{-\frac l2})$, 
where $l=2k+2[\frac n2]+2r$. It is a Hecke eigenform on the Hermitian upper half space $\mathcal H_{n+2r}$ of degree $n+2r$.

Now for a Hecke eigen cusp form $g\in S_l(\Gamma_K^{(r)},\mbox{det}^{-\frac l2})$, define
$$\mathcal F_{f,g}(Z)=\int_{\Gamma_K^{(r)}\backslash \mathcal H_r} I^{(n+2r)}(f)\begin{pmatrix} Z&0\\0&W\end{pmatrix} \overline{g^c(W)} det(Im W)^{l-2r}\, dW,
$$
for $Z\in \mathcal H_{n+r}$. (Here if $4|l$, $h\in S_{l}(SL_2(\Bbb Z))$ can be regarded as $h\in S_{l}(\Gamma_K^{(1)}, \mbox{det}^{-\frac l2})$. 
Note that $\Gamma_K^{(1)}=SL_2(\Bbb Z)\cdot \{\alpha\cdot {\bold 1}_2 | \, \alpha\in \mathcal O^\times\}$. \cite[page 1111]{Ik2})
Then $\mathcal F_{f,g}\in S_l(\Gamma_K^{(n+r)}, \mbox{det}^{-\frac l2})$. 
Atobe and Kojima \cite{AK} showed that if $\mathcal F_{f,g}$ is not identically zero, it is a Hecke eigen form, and its standard $L$-function is given by
$$L(s,\mathcal F_{f,g},St)=L(s,g,St)\prod_{i=1}^n L(s+\tfrac {n-1}2-i,\pi_f)L(s+\tfrac {n-1}2-i,\pi_f\otimes\chi).
$$

Consider the Fourier-Jacobi expansion
$$I^{(n+2r)}(f)\begin{pmatrix} Z&0\\0&W\end{pmatrix}=\sum_{S} \mathcal F_S(W,0) e^{2\pi i Tr(ZS)},
$$
where $S\in S_{n+r}'(\mathcal O)$, $(n+r)\times (n+r)$ positive definite semi-integral Hermitian matrices, 
and $\mathcal F_S$ is a Fourier-Jacobi coefficient of index $S$. 
Then
$$\mathcal F_S(W,0)=\sum_{\lambda\in \Lambda} \theta_{[\lambda]}(S;W,0) \mathcal F_{S,\lambda}(W),
$$
where $\Lambda=(2S)^{-1}\mathcal O^{n+r}/\mathcal O^{n+r}$, and $\theta_{[\lambda]}(S;W,0)$ is a theta series of $n+r$ variables.
Then

$$\mathcal F_{f,g}(Z)
=\sum_S A_S e^{2\pi i Tr(ZS)},
$$
where 
$$A_S=\int_{\Gamma_K^{(r)}\backslash \mathcal H_r} \mathcal F_S(W,0)\overline{g^c(W))} det(Im W)^{2k+2[\frac n2]}\, dW.
$$

Now we consider the special case: $4| (n+r)$. Choose $S$ so that $2S$ is an even unimodular Hermitian matrix. Then $\Lambda$ is trivial, and
$\mathcal F_S(W,0)=H(W)\theta_S(W)$, where $H(W)$ is a Hermitian cusp form of weight $2k+2[\frac n2]-n+r$ and 
$\theta_S(W):=\theta_{[0]}(S;W,0)$ is the theta function for $S$ and the trivial class $[0]$, which is a Hermitian modular form of weight 
$n+r$ and level one. 

For $l>2m$, let $E_{l}(Z):=\sum_{\G_{\infty}\bs \G_K^{(m)}}(\det g)^{\frac l2} \det(CZ+D)^{-l}$.
By applying the Siegel-Weil formula for unitary groups \cite{Ich}, we obtain the following 
analogue of the Siegel formula for Hermitian lattices:
\begin{theorem}
For $4|m$, let $C_m$ be the set of classes (up to isomorphism) of even unimodular Hermitian lattices of $m$ variables, and let $g_S\in C_m$ be the order of the automorphism group of $S\in C_m$. Then 
$$\sum_{S\in C_m} \frac 1{g_S} \theta_S=M_m E_{m},\quad \text{where $M_m=\sum_{S\in C_m} \frac 1{g_S}$}.
$$
\end{theorem}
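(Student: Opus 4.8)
The plan is to realize this identity as a special case of the Siegel--Weil formula for unitary groups due to Ichino \cite{Ich}, exactly as the preceding theorem of Siegel is the Siegel--Weil formula in the orthogonal case. Let $V$ be the positive definite Hermitian space of dimension $m$ over $K$ underlying the genus of even unimodular lattices, and let $U(V)$ be its isometry group, paired with $U(d,d)$ (where $d$ is the degree of the Hermitian modular forms in question) in the relevant reductive dual pair. Since $V$ is positive definite, $U(V)(\R)$ is compact, so the adelic quotient $U(V)(\Q)\bs U(V)(\A)$ is compact and every theta integral below converges absolutely; this is what places us in Weil's convergent range, where no regularization is needed.

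First I would form the adelic theta kernel $\theta(g,h;\varphi)$ attached to $(U(V),U(d,d))$, choosing the Schwartz--Bruhat function $\varphi=\otimes_v\varphi_v$ to be the standard Gaussian at the archimedean place and the characteristic function of the lattice at each finite place, so that the restriction of $\theta(\cdot,h;\varphi)$ recovers the classical theta series $\theta_S$ of weight $m$. Then I would unfold the theta integral $I(g)=\int_{U(V)(\Q)\bs U(V)(\A)}\theta(g,h;\varphi)\,dh$: decomposing $U(V)(\A_f)$ into double cosets indexed by the class set of the genus, and using that $U(V)(\R)$ is compact so each local integral is finite, one obtains
$$I(g)=\mathrm{vol}\cdot\sum_{S\in C_m}\frac{1}{g_S}\,\theta_S(g),$$
where the order $g_S=\#\mathrm{Aut}(S)$ appears as the order of the finite stabilizer attached to each class; this is the standard mass-formula bookkeeping.

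Next I would invoke Ichino's Siegel--Weil formula \cite{Ich}: in the convergent range --- which is precisely the range $l>2m$ entering the definition of $E_l$ above --- the theta integral equals a value of the Siegel Eisenstein series,
$$I(g)=\kappa\cdot E(g,s_0;\Phi_\varphi),$$
where $\Phi_\varphi$ is the standard section attached to $\varphi$, the point $s_0$ corresponds to scalar weight $m$, and $\kappa$ is an explicit volume constant. Translating $E(g,s_0;\Phi_\varphi)$ into classical language yields $E_m(Z)$. To pin down the overall constant I would compare constant terms: each $\theta_S$ contributes $1$ from the zero lattice vector, so the weighted sum has constant term $\sum_S 1/g_S=M_m$, while the normalized $E_m$ has constant term $1$ coming from the trivial coset $(C,D)=(0,I)$. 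Matching these forces the constant to be exactly $M_m$, giving $\sum_{S\in C_m}\frac{1}{g_S}\theta_S=M_mE_m$.

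The hard part will be the bookkeeping of normalizations: tracking the product of local volume factors so that exactly the mass $M_m$ --- and not some other multiple of it --- emerges, and checking that the classical and adelic normalizations of both the theta series and the Siegel Eisenstein series are aligned at the special point $s_0$. Verifying that positivity together with $4\mid m$ puts us squarely in the convergent range of the unitary Siegel--Weil formula, so that Ichino's identity applies on the nose without any regularization, is the other point requiring care.
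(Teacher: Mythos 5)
Your proposal takes essentially the same approach as the paper: the paper obtains this theorem precisely by invoking the Siegel--Weil formula for unitary groups of Ichino \cite{Ich}, which in the positive definite (hence convergent, no regularization needed) case unfolds the theta integral into the mass-weighted sum over the class set of the genus and identifies it with the value of the Siegel Eisenstein series, the constant $M_m$ being fixed by comparing constant terms. Your write-up simply makes explicit the standard dual-pair and volume bookkeeping that the paper leaves implicit.
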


Now for $C_{n+r}$, consider the sum
$$\sum_{S\in C_{2n+r}} \frac 1{g_S} A_S.
$$
It is
\begin{equation}\label{Siegel}
M_{2n+r}\int_{\Gamma_K^{(r)}\backslash \mathcal H_r} H(W)\overline{g^c(W)} E_{n+r}(W) det(Im W)^{2k+2[\frac n2]}\, dW.
\end{equation}
Let 
$$H(W)=\sum_{T\in S_r'(\mathcal O)} a_H(T) e^{2\pi i Tr(TW)},\quad g(W)=\sum_{T\in S_r'(\mathcal O)} a_g(T) e^{2\pi i Tr(TW)}.
$$
Now the integral (\ref{Siegel}) is exactly a scalar multiple of the Rankin convolution
$$R(k+n+r,H,\bar g)=\sum_{T\in \widetilde S'_{r}(\mathcal O)^+} \frac {a_H(T)\overline{a_g(T)}}{\epsilon(T) det(T)^{2k+2[\frac n2]+r}},
$$
where $\widetilde S'_{r}(\mathcal O)^+$ is the set of $GL_r(\mathcal O)$-equivalence classes of matrices $T\in S_r(\mathcal O)^+$, and 
$\epsilon(T)=\#\{ U\in GL_r(\mathcal O) | \, UT {}^t \bar U=T\}$. 

Now as in the Siegel case, we can show that $H(W)$ is the Ikeda lift of $f$ to $U(r,r)$. 
When $r=1$, $n$ is of the form $4m+3$, and the above integral is related to the Rankin-Selberg $L$-function $L(\frac {n+1}2, H\otimes \bar g)$ as in Section \ref{GS}. Hence it is non-vanishing.

If $r>1$, we assume the analogue of Conjecture \ref{rankin}. Then
\begin{theorem} 
Let $K$ be an imaginary quadratic field with discriminant $-D$. Let $\chi=\chi_D$ be the Dirichlet character corresponding to $K/\Bbb Q$.
Let $f$ be a normalized Hecke eigen cusp form belonging to $\begin{cases} S_{2k}(SL_2(\Bbb Z)), &\text{if $n$ odd}\\ S_{2k+1}(\Gamma_0(D),\chi), &\text{if $n$ even}\end{cases}$.
Let $l=2k+2[\frac n2]+2r$, and $g\in S_{l}(\Gamma_K^{(r)},\mbox{det}^{-\frac l2})$, and 
$\mathcal F_{f,g}$ be the Miyawaki lift in $S_{l}(\Gamma_K^{(n+r)},\mbox{det}^{-\frac l2})$. Let $4| (n+r)$. If $r=1$, $\mathcal F_{f,g}$ is non-vanishing. If $r>1$, assuming the analogue of Conjecture \ref{rankin}, it is non-vanishing.
\end{theorem}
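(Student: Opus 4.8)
The plan is to follow verbatim the blueprint established in Sections \ref{GS} and \ref{Ikeda}: rather than analyzing $\mathcal{F}_{f,g}$ directly, I would prove that a suitable weighted average of its Fourier coefficients $A_S$ is non-zero, which forces at least one $A_S\neq 0$ and hence $\mathcal{F}_{f,g}\not\equiv 0$. Since $4\mid (n+r)$, the classification of even unimodular Hermitian $\mathcal{O}$-lattices in \cite{CR} guarantees that the genus $C_{n+r}$ is non-empty, so I may restrict attention to those $S$ with $2S$ even unimodular Hermitian. For each such $S$ the index lattice $\Lambda=(2S)^{-1}\mathcal{O}^{n+r}/\mathcal{O}^{n+r}$ is trivial, and the Fourier--Jacobi coefficient factors as $\mathcal{F}_S(W,0)=H(W)\theta_S(W)$ with $H$ independent of $S$. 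Before summing over $S$, I would first record the absolute convergence needed to interchange the $S$-sum with the $W$-integral; this is the Hermitian analogue of the estimate used in Section \ref{Ikeda} (following Lemma 7.1 of \cite{KY}), obtained by bounding $|\mathcal{F}_S(W,0)|$ through a contour shift at fixed $\mathrm{Im}\,W$ against polynomial growth in $\mathrm{Tr}(S)$.

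The next step is to identify the two inputs to the integral. Arguing exactly as in the Siegel case, I would show that $H(W)$ is the Hermitian Ikeda lift of $f$ to $U(r,r)$, so that its Fourier coefficients $a_H(T)$ are the explicit Ikeda-lift coefficients. Then I would form the weighted sum over the genus,
$$
\sum_{S\in C_{n+r}}\frac{1}{g_S}A_S
=M_{n+r}\int_{\Gamma_K^{(r)}\backslash\mathcal{H}_r}H(W)\,\overline{g^c(W)}\,E_{n+r}(W)\,\det(\mathrm{Im}\,W)^{2k+2[\frac n2]}\,dW,
$$
where the passage from $\sum_S g_S^{-1}\theta_S$ to the Eisenstein series $E_{n+r}$ is precisely the Siegel--Weil formula for unitary groups of \cite{Ich}. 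Unfolding this integral against $E_{n+r}$ in the standard way then expresses it, up to an explicit non-zero constant, as the Rankin convolution $R(k+n+r,H,\bar g)$ recorded above.

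For $r=1$ the forms $H$ and $g$ live on $U(1,1)$, and the Rankin convolution collapses to a single Dirichlet series in the scalar index $T$, namely a Rankin--Selberg $L$-function $L\bigl(\tfrac{n+1}{2},H\otimes\bar g\bigr)$, in direct analogy with the computation in Section \ref{GS}. The hypothesis $4\mid(n+1)$ forces $n\equiv 3\pmod 4$, hence $n\ge 3$ and the evaluation point satisfies $\tfrac{n+1}{2}\ge 2>1$; this places the $L$-value strictly inside the region of absolute convergence of its Euler product, where it is manifestly non-zero. Therefore the weighted average is non-zero and $\mathcal{F}_{f,g}\not\equiv 0$.

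For $r>1$ the same chain of reductions identifies the weighted average with a non-zero multiple of $R(k+n+r,H,\bar g)$, but now this is a genuine multi-variable Rankin convolution evaluated at an edge point, and it no longer factors as an Euler product. This is the main obstacle: there is no elementary reason for such a convolution to be non-zero, and establishing it would require either an analytic input (a positivity or period interpretation) or a special-value formula, neither of which is available here. Accordingly I would isolate exactly this statement as the Hermitian analogue of Conjecture \ref{rankin}, and under that assumption the weighted average is non-zero, so some $A_S\neq 0$ and the Miyawaki lift is non-vanishing, completing the proof.
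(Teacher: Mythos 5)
Your proposal is correct and follows essentially the same route as the paper: restrict to $S$ with $2S$ even unimodular Hermitian (using $4\mid(n+r)$ and \cite{CR}), factor $\mathcal F_S(W,0)=H(W)\theta_S(W)$ with $H$ the Hermitian Ikeda lift of $f$ to $U(r,r)$, convert the genus-weighted sum of the $A_S$ into the Rankin convolution via the Siegel--Weil formula of \cite{Ich}, and then handle $r=1$ by absolute convergence of the Rankin--Selberg Euler product at $\tfrac{n+1}{2}>1$ and $r>1$ by the Hermitian analogue of Conjecture \ref{rankin}. Your added remarks (the interchange-of-sum-and-integral estimate and the explicit check that $n\equiv 3\pmod 4$ forces the evaluation point into the convergence region) are details the paper leaves implicit, but they do not change the argument.
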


\section{Miyawaki lift for half-integral Siegel cusp forms} \label{half}
In this section, assuming the non-vanishing of the integral (\ref{half-1}), we give an outline of non-vanishing of the Miyawaki lift for half-integral Siegel cusp forms given in \cite{H}. 
Let $I^{(2n)}(f)$ be the Ikeda lift as in Section \ref{Ikeda} for $f\in S_{2k}(SL_2(\Bbb Z))$. Consider its Fourier-Jacobi expansion with integer index:
$$
I^{(2n)}(f)\begin{pmatrix} Z_1&u\\ {}^t u&\tau\end{pmatrix}=\sum_{m=1}^\infty \psi_m(Z_1,u) e^{2\pi i m\tau},
$$
where $Z_1\in \Bbb H_{2n-1}$, and $\tau\in \Bbb H$.
Here $\psi_1(Z_1,u)$ is a Jacobi cusp form of weight $k+n$ and index 1 of degree $2n-1$. By the Eichler-Zagier-Ibukiyama correspondence \cite{Ib}, there exists a Siegel cusp form $F_f\in S_{k+n-\frac 12}^+(\Gamma_0^{(2n-1)}(4))$ which corresponds to $\psi_1$. For $g\in S_{k+n-\frac 12}^+(\Gamma_0(4))$, we put
$$\mathcal F_{f,g}(Z)=\int_{\Gamma_0(4)\backslash \Bbb H} F_f\begin{pmatrix} Z&0\\0&\tau\end{pmatrix} \overline{g(\tau)} Im(\tau)^{k+n-\frac 52}\, d\tau,
$$
for $Z\in \Bbb H_{2n-2}$. Then $\mathcal F_{f,g}$ is a cusp form in $S_{k+n-\frac 12}^+(\Gamma_0^{(2n-2)}(4))$.
Hayashida \cite{H} proved that if $\mathcal F_{f,g}$ is not identically zero, it is an eigenform with the standard $L$-function
$$L(s,\mathcal F_{f,g}, St)=L(s,g)\prod_{i=1}^{2n-3} L(s-i,h).
$$
Consider the Fourier-Jacobi expansion of $F_f$ with matrix index
$$F_f\begin{pmatrix} Z&0\\0&\tau\end{pmatrix}=\sum_{S} \mathcal F_S(\tau,0) e^{2\pi i Tr(ZS)},
$$
where $S\in S_{2n-2}'(\Bbb Z)$, and $\mathcal F_S$ is a Fourier-Jacobi coefficient of index $S$. 
Then
$$\mathcal F_S(\tau,0)=\sum_{\lambda\in \Lambda} \theta_{[\lambda]}(S;\tau,0) \mathcal F_{S,\lambda}(\tau),
$$
where $\Lambda=(2S)^{-1}\Bbb Z^{2n-2}/\Bbb Z^{2n-2}$, and $\theta_{[\lambda]}(S;\tau,0)$ is a theta series of $2n-2$ variables.
Then
$$\mathcal F_{f,g}(Z)=\int_{\Gamma_0(4)\backslash \Bbb H} \left(\sum_S \mathcal F_S(\tau,0) e^{2\pi i Tr(ZS)}\right)
\overline{g(\tau))}(Im \tau)^{k+n-\frac 52}\, d\tau
=\sum_S A_S e^{2\pi i Tr(ZS)},
$$
where 
$$A_S=\int_{\Gamma_0(4)\backslash \Bbb H} \mathcal F_S(\tau,0)\overline{g(\tau)} (Im \tau)^{k+n-\frac 52}\, d\tau.
$$
Now we consider the special case: $2n-2$ is a multiple of $8$, and choose $S$ so that $2S$ is an even unimodular matrix. Then $\Lambda$ is trivial, and $\mathcal F_S(\tau,0)=H(\tau)\theta_S(\tau)$, where $H(\tau)$ is a cusp form of weight $k+\frac 12$.

Now we apply the Siegel formula.
For $8|(2n-2)$, let $C_{2n-2}$ be the set of classes (up to isomorphism) of even unimodular lattices of $2n-2$ variables, and let $g_S\in C_{2n-2}$ be the order of the automorphism group of $S\in C_{2n-2}$. Then 
$$\sum_{S\in C_{2n-2}} \frac 1{g_S} \theta_S=M_{2n-2} E_{\frac {2n-2}2},\quad \text{where $M_{2n-2}=\sum_{S\in C_{2n-2}} \frac 1{g_S}$}.
$$
Now consider the sum
$$\sum_{S\in C_{2n-2}} \frac 1{g_S} A_S.
$$
It is
\begin{equation}\label{half-1}
M_{2n-2}\int_{\Gamma_0(4)\backslash \Bbb H} H(\tau)\overline{g(\tau)} E_{n-1}(\tau) (Im \tau)^{k+n-\frac 52}\, d\tau.
\end{equation}
It may be possible to show that it is non-vanishing. Then one of $A_S$ is non-vanishing, and $\mathcal F_{f,g}$ is non-vanishing.

\medskip


\begin{thebibliography}{99}


\bibitem{At}H. Atobe, A theory of Miyawaki liftings: The Hilbert-Siegel case, preprint available on arXiv:1712.03624. 

\bibitem{AK} H. Atobe and H. Kojima, {\em On the Miyawaki lifts of Hermitian modular forms}, J. Number. Theory, Volume 185, (2018), 
281-318. 
  

\bibitem{Bump}D. Bump, Automorphic Forms and Representations, Cambridge Studies in Advanced Mathematics, 55. Cambridge University Press, Cambridge, 1997.

\bibitem{CR} D.M. Cohen and H.L. Resnikoff, {\em Hermitian quadratic forms and Hermitian modular forms}, Pac. J. Math. {\bf 78} (1978), 329--337.
\bibitem{GGP} W. T. Gan, B. H. Gross and D. Prasad, {\em Symplectic local root numbers, central critical L-values, and
restriction problems in the representation theory of classical groups}, Ast\'erisque. No. 346 (2012), 1--109. 

\bibitem{Garrett} P. Garrett, {\em Pullbacks of Eisenstein series; applications}, Automorphic forms of several variables (Katata, 1983), 114-137, Progr. Math., 46, Birkhauser Boston, Boston, MA, 1984. 

\bibitem{H} S. Hayashida, {\em Lifting from two elliptic modular forms to Siegel modular forms of half-integral weight of even degree}, Doc. Math. {\bf 21} (2016), 125--196.
\bibitem{H1} \bysame, {\em Fourier-Jacobi expansion and the Ikeda lift}, Abh. Math. Semin. Univ. Hambg. {\bf 81} (2011), 1--17. 

\bibitem{HKN} M. Hentschel, A. Krieg and G. Nebe, {\em On the classification of lattices over $\Bbb Q(\sqrt{-3})$ which are even unimodular $\Bbb Z$-lattices}, preprint. 
\bibitem{Ib} T. Ibukiyama, {\em On Jacobi forms and Siegel modular forms of half integral weights}, Comment. Math. Univ. St. Paul, {\bf 41} (1992), 109--124. 

\bibitem{Ich} A. Ichino, {\em On the Siegel-Weil formula for unitary groups}, Math. Z. {\bf 255} (2007), no. 4, 721--729.

\bibitem{Ik} T. Ikeda, {\em On the lifting of elliptic cusp forms to Siegel cusp forms of degree $2n$}, Ann. of Math. {\bf 154} (2001), 641--681.

\bibitem{Ik1} \bysame, {\em Pullback of the lifting of elliptic cusp forms and Miyawaki's conjecture}, Duke Math. J. {\bf 131} (2006), 469--497.

\bibitem{Ik2} \bysame, {\em On the lifting of Hermitian modular forms}, Comp. Math. {\bf 144} (2008), 1107--1154. 

\bibitem{IY} T. Ikeda and S. Yamana, {\em On the lifting of Hilbert cusp forms to Hilbert-Siegel cusp forms}, preprint 2017.
\bibitem{Kal} V.L. Kalinin, {\em  Analytic properties of the convolution of Siegel modular forms of genus $n$}, 
Mat. Sb. (N.S.) 120 (162) (1983), no. 2, 200--206, 286--287. 

\bibitem{Ka} M. Karel, {\em Fourier coefficients of certain Eisenstein series}, Ann. of Math. {\bf 99} (1974), 176--202. 

\bibitem{KY} H. Kim and T. Yamauchi, {\em Cusp forms on the exceptional group of type $E_7$}, Compositio Math. {\bf 152} (2016), 223--254.

\bibitem{KY1} \bysame, {\em A Miyawaki type lift for $GSpin(2,10)$}, Math. Z. {\bf 288} (2018), no. 1-2, 415--437.

\bibitem{KY2}\bysame, {\em Higher level cusp forms on the exceptional group of type $E_7$}, preprint 2017. 

\bibitem{KMV} E. Kowalski, P. Michel, and J. VanderKam, {\em Rankin-Selberg L-functions in the level aspect}, Duke Math. J. {\bf 114} (2002), no. 1, 123--191. 

\bibitem{M} I. Miyawaki, {\em Numerical examples of Siegel cusp forms of degree 3 and their zeta-functions}, Mem. Fac. Sci. Kyushu Univ. Ser. A {\bf 46} (1992), no. 2, 307--339.

\bibitem{Schmidt}R. Schmidt, {\em Some remarks on local newforms for $GL(2)$}, J. Ramanujan Math. Soc. {\bf 17} (2002), 115--147. 

\bibitem{T} M. Tsuzuki, {\em Spectral means of central values of automorphic $L$-functions for GL(2)}, Memoirs of AMS, {\bf 235} (2015), no. 1110.

\bibitem{Y} T. Yamazaki, {\em Rankin-Selberg method for Siegel cusp forms}, Nagoya Math. J. {\bf 120} (1990), 35--49.

\end{thebibliography}
\end{document}